\tikzset{bullet/.style={
shape = circle,fill = black, inner sep = 0pt, outer sep = 0pt, minimum size = 0.35em, line width = 0pt, draw=black!100}}
\tikzset{circle/.style={
shape = circle,fill = none, inner sep = 0pt, outer sep = 0pt, minimum size = 0.35em, line width = 1pt, draw=black!100}}
\tikzset{rectangle/.style={
shape = rectangle,fill = white, inner sep = 0pt, outer sep = 0pt, minimum size = 0.35em, line width = 0pt, draw=black!100}}
\tikzset{empty/.style={
shape = circle,fill = white, inner sep = 0pt, outer sep = 0pt, minimum size = 0.35em, line width = 0pt, draw=white!100}}
\tikzset{xmark/.style={
shape = x,fill = white, inner sep = 0pt, outer sep = 0pt, minimum size = 0em, line width = 0pt, draw=white!100}}
\tikzset{longrectangle/.style={
inner sep = 1em,
% The shape:
rectangle,
% The size:
minimum size=1em,
% The border:
very thick,
draw=black!100, % 50% red and 50% black,
% and that mixed with 50% white
% The filling:
%top color=white, % a shading that is white at the top...
%bottom color=red!50!black!20, % and something else at the bottom
% Font
%font=\itshape
}}
\tikzset{label distance=-0.15em}
\tikzset{font=\scriptsize}
\newtheorem{theorem}{Theorem}[section]
\newtheorem{lemma}[theorem]{Lemma}
\newtheorem{proposition}[theorem]{Proposition}
\newtheorem{corollary}[theorem]{Corollary}
\newtheorem*{corollarywon}{Corollary}
\newtheorem*{theoremwon}{Theorem}
\theoremstyle{definition}
\newtheorem{remark}[theorem]{Remark}
\numberwithin{equation}{section}
\DeclarePairedDelimiter{\bilinear}{\langle}{\rangle}
\begin{document}

\title{Symplectic rational blow-ups on rational 4-manifolds}

\author[H. Park]{Heesang Park}

\address{Department of Mathematics, Konkuk University, Seoul 05029, Republic of Korea}

\email{HeesangPark@konkuk.ac.kr}

\author[D. Shin]{Dongsoo Shin}

\address{Department of Mathematics, Chungnam National University, Daejeon 34134, Republic of Korea}

\email{dsshin@cnu.ac.kr}

\subjclass[2010]{57R17, 57R65, 14D06}

\keywords{rational blow-up, rational 4-manifold}

\begin{abstract}
We prove that if a symplectic 4-manifold $X$ becomes a rational 4-manifold after applying rational blow-down surgery, then the symplectic 4-manifold $X$ is originally rational. That is, a symplectic rational blow-up of a rational symplectic $4$-manifold is again rational. As an application we show that a degeneration of a family of smooth rational complex surfaces is a rational surface if the degeneration has at most quotient surface singularities, which generalizes slightly a classical result of [Bădescu, 1986] in algebraic geometry under a mild additional condition.
\end{abstract}

\maketitle

\section{Introduction}

A \emph{rational blow-down} is a surgery that replaces a regular neighborhood of a certain linear chains of negative 2-spheres (called $C_{p,q}$) with the corresponding rational homology ball (called $B_{p,q}$); for a brief summary on this surgery, see Section~\ref{section:rational-blow-down}. A rational blow-down surgery is a symplectic one. That is, if a symplectic 4-manifold $X$ contains a linear chain $C_{p,q}$ consisting of symplectic 2-spheres, then the resulting 4-manifold $X_{p,q}$ after applying a rational blow-down surgery to $X$ along $C_{p,q}$ is again a symplectic 4-manifold. So many interesting symplectic 4-manifolds have been constructed via a sequence of rational blow-down surgeries.

Especially exotic symplectic 4-manifolds (for instance, exotic $\mathbb{CP}^2 \sharp n \overline{\mathbb{CP}}^2$'s) have been constructed using rational blow-down surgeries, which is possible because a rational blow-down surgery \emph{is used to increase} the Kodaira dimension $\kappa(X)$ of an original symplectic 4-manifold $X$. Hence one may raise a question: Is it true that $\kappa(X) \le \kappa(X_{p,q})$ in any case? Conversely, is it possible that a rational blow-down surgery \emph{decrease} the Kodaira dimension in some case? But there are no known results so far, as far as the authors know.

In this paper we show a partial answer to the above question.

\begin{theoremwon}[Theorem~\ref{theorem:Main}]
Let $(X,\omega_X)$ be a symplectic 4-manifold. Suppose that a configuration $C_{p,q}$ is symplectically embedded in $X$; that is, all 2-spheres $u_i$ in $C_{p,q}$ are symplectically embedded in $X$ and they intersect positively. Let $(X_{p,q}, \omega_{p,q})$ be the symplectic rational blow-down of $(X,\omega_X)$ along $C_{p,q}$. If $X_{p,q}$ is rational, then so is $X$.
\end{theoremwon}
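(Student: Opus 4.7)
The plan is to verify three properties of $X$ and then invoke the classification of symplectic 4-manifolds containing positive-square symplectic spheres: (i) $b^{+}(X) = 1$, (ii) $\pi_{1}(X) = 1$, and (iii) $X$ contains an embedded symplectic 2-sphere of non-negative self-intersection. By McDuff's theorem, (iii) forces $X$ to be rational or ruled; combined with (ii), since irrational ruled surfaces have non-trivial $\pi_{1}$, $X$ must be rational.

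Properties (i) and (ii) are purely topological and follow from the structure of the rational blow-down. Writing $X = \widetilde{X} \cup_{L} C_{p,q}$ and $X_{p,q} = \widetilde{X} \cup_{L} B_{p,q}$, where $L = L(p^{2}, pq-1)$ is the common lens space boundary and $\widetilde{X}$ is the shared complement, we use that $C_{p,q}$ is a negative definite plumbing and $B_{p,q}$ is a rational homology ball, so both have $b^{+} = 0$. Mayer--Vietoris (using $H_{2}(L; \mathbb{Q}) = 0$) then yields $b^{+}(X) = b^{+}(\widetilde{X}) = b^{+}(X_{p,q}) = 1$. For fundamental groups, $\pi_{1}(C_{p,q}) = 1$, while the standard surjection $\pi_{1}(L) \cong \mathbb{Z}/p^{2} \twoheadrightarrow \pi_{1}(B_{p,q}) \cong \mathbb{Z}/p$ implies, via Van Kampen, that $\pi_{1}(X)$ is a quotient of $\pi_{1}(X_{p,q}) = 1$.

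Property (iii) is the geometric heart of the argument. Since $X_{p,q}$ is rational, it contains an abundance of symplectic 2-spheres of positive self-intersection (lines in a birational model, fiber or section classes in an $S^{2}$-bundle model). The aim is to produce one such sphere $\Sigma$ contained in the complement $\widetilde{X} \subset X_{p,q}$ of the embedded copy of $B_{p,q}$; it then lives as a symplectic sphere in $X$ of the same self-intersection (the self-intersection being computed intrinsically from the normal bundle). To construct $\Sigma$, the approach is to use $J$-holomorphic curve theory: for a tame $J$ on $(X_{p,q}, \omega_{p,q})$, non-vanishing Gromov--Taubes invariants in a sufficiently ample class yield a positive-dimensional moduli space of representatives; choosing $J$ adapted to the embedded $B_{p,q}$ (for instance, integrable there) and exploiting positivity of intersections together with genericity in $J$ should deliver a representative in $\widetilde{X}$.

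The main obstacle is precisely this displacement step. Since $B_{p,q}$ is an open 4-dimensional region inside the 4-manifold $X_{p,q}$, a 2-dimensional sphere cannot be pushed off of it by a naive dimension count; every $J$-holomorphic representative could in principle intersect $B_{p,q}$. Controlling the evaluation map from the moduli space of $J$-holomorphic spheres, and exploiting the freedom in the choice of $J$ and the abundance of ample classes on the rational surface $X_{p,q}$ to guarantee that at least one representative avoids $B_{p,q}$, is where the central technical work of the proof will lie.
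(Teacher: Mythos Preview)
Your steps (i) and (ii) are correct, but step (iii) is not a proof: you yourself identify the displacement of a $J$-holomorphic sphere off the open $4$-ball $B_{p,q}$ as ``the central technical work'' and then leave it undone. There is no dimension argument available, positivity of intersections gives you nothing against a $4$-dimensional region, and ``genericity in $J$'' does not force any member of a moduli family to miss a fixed open set. Without an actual mechanism (e.g.\ a foliation/fibration structure on $X_{p,q}$ whose leaves are disjoint from $B_{p,q}$, or an explicit homological obstruction), this is a hope, not an argument. As it stands the proposal is incomplete precisely at the only nontrivial step.

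The paper avoids this difficulty entirely by working with the numerical criterion $K\cdot\omega<0$ of McDuff--Salamon rather than searching for a sphere. Since $B_{p,q}$ is a rational homology ball one has
\[
K_X\cdot\omega_X \;=\; K_{\omega_{p,q}}\cdot\omega_{p,q} \;+\; K_X|_{C_{p,q}}\cdot\omega_X|_{C_{p,q}},
\]
and the second term is $\le 0$ because the intersection form on $C_{p,q}$ in the dual basis is $P^{-1}$ with $P$ negative definite (so all entries of $P^{-1}$ are $\le 0$), while $K_X\cdot u_i=-2-u_i^2\ge 0$ and $\omega_X\cdot u_i>0$. Thus if $K_{\omega_{p,q}}\cdot\omega_{p,q}<0$ one gets $K_X\cdot\omega_X<0$ immediately. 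If instead $K_{\omega_{p,q}}\cdot\omega_{p,q}\ge 0$, the paper uses that on a rational $4$-manifold any two cohomologous symplectic forms are symplectomorphic to build a smooth path of symplectic forms on $X_{p,q}$ from $\omega_{p,q}$ to some $\omega_1$ with $K_{\omega_1}\cdot\omega_1<0$; a neck-stretching of this family across $\partial B_{p,q}$ then lets one redo the rational blow-up with the favorable sign, reducing to the first case. This route is purely cohomological/numerical and never needs to locate a sphere in the complement of $B_{p,q}$.
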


So a symplectic rational blow-down surgery cannot decrease the Kodaira dimension of a symplectic 4-manifold of with positive Kodaira dimension to $-\infty$. In terms of \emph{rational blow-up} (cf. Section~\ref{section:rational-blow-down}), it may be said that a symplectic rational blow-up of a rational symplectic $4$-manifold is again rational. It is easy to extend the above theorem to the case that $X$ contains many disjoint $C_{p,q}$'s; Corollary~\ref{corollary:multiple-cpq}.

On the other hand, one may characterize \emph{degenerations} of rational complex surfaces in algebraic geometry using the above result. Bădescu~\cite{Badescu-1986} classifies degenerations of rational complex surfaces into two classes; see Proposition~\ref{proposition:Badescu}. We generalize the result of Bădescu under a certain mild condition.

\begin{corollarywon}[Corollary~\ref{corollary:degeneration}]
Let $X$ be a degeneration of a family $\{X_t \mid t \in \Delta-\{0\}\}$ of rational surfaces. Suppose that the degeneration $X$ has at most quotient surface singularities. Then its minimal resolution $\widetilde{X}$ is also a rational surface.
\end{corollarywon}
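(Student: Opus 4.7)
The plan is to reduce the corollary to the multi-configuration version of Theorem~\ref{theorem:Main} (Corollary~\ref{corollary:multiple-cpq}) via the Kollár--Shepherd-Barron theory of P-resolutions of quotient surface singularities. Write $\pi\colon\mathcal{X}\to\Delta$ for the degeneration, with $X=\pi^{-1}(0)$ and $X_t=\pi^{-1}(t)$ smooth rational for $t\neq0$. The first step is to construct, after a finite base change and an appropriate refinement, a $\mathbb{Q}$-Gorenstein family $\mathcal{X}'\to\Delta$ whose general fiber is still diffeomorphic to $X_t$ and whose central fiber $X'$ is a partial resolution of $X$ with only Wahl singularities of the form $\tfrac{1}{p^2}(1,pq-1)$. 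This combines the Kollár--Shepherd-Barron bijection between smoothing components of a quotient surface singularity and its P-resolutions with an M-resolution-type refinement that breaks every class T singularity with $d\ge 2$ into $T_1$ (i.e.\ Wahl) singularities. Let $\widetilde{X'}\to X'$ denote its minimal resolution; it is a smooth K\"ahler surface birational to $\widetilde{X}$.

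Next I would compare $\widetilde{X'}$ with $X_t$ locally at each Wahl point. The minimal resolution of a Wahl singularity $\tfrac{1}{p^2}(1,pq-1)$ produces precisely a linear chain $C_{p,q}$ of holomorphically (hence symplectically) embedded $2$-spheres in $\widetilde{X'}$, while the Milnor fiber of the standard $\mathbb{Q}$-Gorenstein smoothing of $\tfrac{1}{p^2}(1,pq-1)$ is the rational homology ball $B_{p,q}$. Consequently, at the level of smooth symplectic $4$-manifolds, $X_t$ is obtained from $\widetilde{X'}$ by replacing a neighborhood of each $C_{p,q}$ with the corresponding $B_{p,q}$; equivalently, $X_t$ is the symplectic rational blow-down of $\widetilde{X'}$ along the disjoint union of $C_{p,q}$ configurations lying over the Wahl points of $X'$. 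Corollary~\ref{corollary:multiple-cpq}, applied to $\widetilde{X'}$ and $X_t$ (which is rational by hypothesis), then forces $\widetilde{X'}$ to be rational. Since $\widetilde{X}$ and $\widetilde{X'}$ are smooth projective surfaces both birational to $X$, they are birational to each other; as rationality of smooth projective surfaces is a birational invariant, $\widetilde{X}$ is rational.

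The main obstacle is the algebraic-geometric construction of $\mathcal{X}'$ in the first step: one must combine the Kollár--Shepherd-Barron P-resolution with an M-resolution-type refinement in the family setting, and verify that the $\mathbb{Q}$-Gorenstein character is preserved and that the smoothing of each resulting Wahl singularity in the central fiber is the standard $\mathbb{Q}$-Gorenstein one whose Milnor fiber is $B_{p,q}$. Once this algebraic step is granted, the identification in the second step between the $\mathbb{Q}$-Gorenstein smoothing of a Wahl singularity and the symplectic rational blow-down construction is a well-understood local matter based on the explicit local model of a Wahl smoothing and standard compatibility between K\"ahler degenerations and symplectic surgery, so the application of Theorem~\ref{theorem:Main} proceeds cleanly.
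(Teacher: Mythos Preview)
Your argument is correct, but the paper's proof takes a shorter and more direct route. Instead of passing through Koll\'ar--Shepherd-Barron P-resolutions and M-resolutions to reduce to Wahl singularities, the paper invokes the symplectic-filling classification of Bhupal--Ozbagci and H.~Choi--J.~Park: every minimal symplectic filling of the link of a quotient surface singularity is obtained from a neighborhood of the exceptional configuration in the \emph{minimal resolution itself} by a finite sequence of symplectic rational blow-downs. Since the Milnor fiber determined by the given smoothing is such a filling, the general fiber $X_t$ is obtained directly from $\widetilde{X}$ by rational blow-downs, and Corollary~\ref{corollary:multiple-cpq} applies to $\widetilde{X}$ without any intermediate partial resolution $X'$ or birational-invariance argument.

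Your approach trades that symplectic-topology input for algebraic deformation theory: you only need the classical identification of the $\mathbb{Q}$-Gorenstein smoothing of a single Wahl singularity with the rational blow-down of $C_{p,q}$, at the price of constructing the global $\mathbb{Q}$-Gorenstein family $\mathcal{X}'\to\Delta$ with M-resolved central fiber (the ``main obstacle'' you flag, which is indeed handled by KSB plus Behnke--Christophersen) and then transferring rationality from $\widetilde{X'}$ to $\widetilde{X}$ by birational invariance. Both routes land on Corollary~\ref{corollary:multiple-cpq}; the paper's is more economical, while yours has the minor conceptual advantage of invoking the surgery only in its original Wahl form.
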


In Section~\ref{section:rational-blow-down} we briefly review basics on rational blow-down surgery. We construct a special family of symplectic structures on certain symplectic 4-manifolds, which will be used in the proof of the main result Theorem~\ref{theorem:Main} given in Section~\ref{section:main-theorem}. We discuss some open problem also in Section~\ref{section:main-theorem}. Finally, we classify a degeneration of rational surfaces in Section~\ref{section:degenerations}.

\subsection*{Acknowledgements}

Heesang Park was supported by Basic Science Research Program through the National Research Foundation of Korea (NRF) funded by the Ministry of Education: NRF-2021R1F1A1063959. Dongsoo Shin was supported by research fund of Chungnam National University in 2020. The authors would like to thank Korea Institute for Advanced Study for warm hospitality when they were associate members in KIAS.

\section{The rational blow-down}\label{section:rational-blow-down}

A \emph{rational blow-down surgery} was developed by Fintushel--Stern~\cite{Fintushel-Stern-1997} and generalized by J. Park~\cite{JPark-1997}. Let $C_{p,q}$ ($1 \le q < p$) be a regular neighborhood of the linear chain of smooth $2$-spheres $u_i$ whose  dual graph is
\begin{equation*}%\label{equation:Cpq}
\begin{tikzpicture}
\node[bullet] (10) at (1,0) [label=above:{$-b_1$}] {};
\node[bullet] (20) at (2,0) [label=above:{$-b_2$}] {};

\node[empty] (250) at (2.5,0) [] {};
\node[empty] (30) at (3,0) [] {};

\node[bullet] (350) at (3.5,0) [label=above:{$-b_{r-1}$}] {};
\node[bullet] (450) at (4.5,0) [label=above:{$-b_r$}] {};

\draw [-] (10)--(20);
\draw [-] (20)--(250);
\draw [dotted] (20)--(350);
\draw [-] (30)--(350);
\draw [-] (350)--(450);
\end{tikzpicture}
\end{equation*}
with
\begin{equation*}
\frac{p^2}{pq-1} = b_1 - \cfrac{1}{b_2 - \cfrac{1}{b_3 - \cfrac{1}{\ddots - \cfrac{1}{b_r}}}}
\end{equation*}
for $b_i \ge 2$ ($ 1 \leq i \leq r$) in a smooth $4$-manifold $X$. One the other hand, a rational homology ball $B_{p,q}$ is a smooth $4$-manifold with the lens space $L(p^2, pq-1)$ as its boundary such that $H_{\ast}(B_{p,q};\mathbb{Q}) \cong H_{\ast}(B^4;\mathbb{Q})$.

One may cut $C_{p,q}$ from $X$ and paste $B_{p,q}$ along the boundary $L(p^2,pq-1)$ so that one obtains a new smooth $4$-manifold $X_{p,q}=(X-C_{p,q}) \cup_{L(p^2,pq-1)} B_{p,q}$. This surgery is called a \emph{rational blow-down surgery} along $C_{p,q}$.

The surgery can be performed compatibly with a symplectic structure, provided that the $2$-spheres $u_i$ are symplectic submanifolds of $X$; Symington~\cite{Symington-2001}. That is, if $X$ is a symplectic $4$-manifold and if each $2$-spheres $u_i$'s in $C_{p,q}$ are symplectic $2$-spheres intersecting positively with each other, then one can glue $B_{p,q}$ to $X-C_{p,q}$ along $\partial C_{p,q}$ so that the rational blow-down $X_{p,q}$ is also a symplectic $4$-manifold. Indeed such a cut-and-paste surgery can be performed in symplectic category for much general situations; See Park-Stipsicz~\cite{Park-Stipsicz-2014}.

Conversely, suppose that a symplectic $4$-manifold $Y$ contains a rational homology ball $B_{p,q}$. Then one can construct a new symplectic $4$-manifold $Y' = (Y-B_{p,q}) \cup_{L(p^2,pq-1)} C_{p,q}$ by replacing $B_{p,q}$ with $C_{p,q}$, which is called a \emph{symplectic rational blow-up surgery}.

\section{A special smooth family of symplectic structures}%\label{section:symplectic-structures}

We prove that there is a certain special smooth family of symplectic structures on a rationally blown-down 4-manifold that is rational.

\begin{proposition}\label{proposition:Key-proposition}
Assume that $X_{p,q}$ is rational and that $K_{\omega_{p,q}} \cdot \omega_{p,q} \ge 0$. Then there is a smooth family of symplectic structures $\omega_t$ ($t \in [0,1]$) on $X_{p,q}$ such that $\omega_0=\omega_{p,q}$ and $K_{\omega_1} \cdot \omega_1 < 0$.
\end{proposition}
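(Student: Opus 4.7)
The guiding observation is that $K_{\omega_t}$ is constant as a cohomology class along any smooth family of symplectic forms: choosing a smooth family of $\omega_t$-compatible almost complex structures $J_t$ and noting that the canonical class is $-c_1(X_{p,q}, J_t)$, one sees that it varies continuously in the discrete group $H^2(X_{p,q};\mathbb{Z})$ and must be constant. Thus $K_{\omega_t}\cdot\omega_t = K\cdot[\omega_t]$ for the fixed class $K = K_{\omega_{p,q}}$, and the problem reduces to producing a smooth path $[\omega_t]$ of symplectic cohomology classes from $[\omega_{p,q}]$ to a class $[\omega_1]$ with $K\cdot[\omega_1] < 0$, then lifting it to a smooth family of symplectic forms via a Moser-type argument.

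Under the hypothesis $K\cdot\omega_{p,q} \ge 0$, the manifold $X_{p,q}$ cannot be $\mathbb{CP}^2$ or $S^2\times S^2$: on each of these a direct computation in the standard basis shows $K\cdot[\omega] < 0$ for every symplectic class (because $[\omega]^2 > 0$ forces the coefficients into the forward positive cone). So $X_{p,q}$ is diffeomorphic to $\mathbb{CP}^2\#n\overline{\mathbb{CP}}^2$ for some $n \ge 1$. Writing $K = -3H + \sum_{i=1}^{n} E_i$ in the standard basis, a Kähler class of the form $[\omega_1] = aH - \sum_i b_i E_i$ with $a$ sufficiently large relative to the (positive) $b_i$ lies in the symplectic cone and satisfies $K\cdot[\omega_1] = -3a + \sum_i b_i < 0$.

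To connect $[\omega_{p,q}]$ and $[\omega_1]$ inside the symplectic cone, I would appeal to T.-J.~Li and A.-K.~Liu's description of the symplectic cone of a rational 4-manifold: the relevant component is the intersection of one (convex) component of the positive cone $\{\alpha \in H^2(X_{p,q};\mathbb{R}) : \alpha^2 > 0\}$ with the half-spaces $\{\alpha : \alpha \cdot e > 0\}$ indexed by all classes $e$ represented by smoothly embedded $(-1)$-spheres. This intersection is itself convex, so once $[\omega_1]$ is arranged to lie in the same forward component as $[\omega_{p,q}]$ (which is automatic for $a$ large enough that $[\omega_{p,q}]\cdot[\omega_1] > 0$), the straight segment $(1-t)[\omega_{p,q}] + t[\omega_1]$ lies entirely in the symplectic cone. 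Lifting this path of cohomology classes to a smooth family of symplectic forms $\omega_t$ is then a standard Moser argument.

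The main obstacle is the appeal to the Li--Liu classification and the verification that the straight segment stays in the correct component of the symplectic cone. A more hands-on alternative would be to inflate $\omega_{p,q}$ along a symplectic exceptional $(-1)$-sphere $E \subset X_{p,q}$, whose existence follows from Taubes' Seiberg--Witten theorem once smoothly minimal cases are ruled out; since $K\cdot E = -1$ by adjunction, such inflation decreases $K\cdot[\omega_t]$ linearly in the inflation parameter. However, the negative self-intersection $E^2 = -1$ makes the admissible amount of inflation delicate to bound and would require formalizing via the symplectic blow-up/blow-down correspondence, so the Li--Liu route seems cleaner.
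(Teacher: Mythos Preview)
Your route is genuinely different from the paper's. The paper does not invoke the Li--Liu description of the symplectic cone or any convexity argument. Instead it picks an auxiliary symplectic form $\widetilde\omega$ on $X_{p,q}$ with $K_{\widetilde\omega}\cdot\widetilde\omega<0$ (whose existence is the McDuff--Salamon characterisation of rational/ruled manifolds), takes the linear segment $\lambda(t)=(1-t)\omega_{p,q}+t\widetilde\omega$ of closed $2$-forms, and covers $[0,1]$ by finitely many closed subintervals on each of which a short smooth family $\eta_i(s)$ of symplectic forms with $[\eta_i(s)]=[\lambda(s)]$ exists (openness of the symplectic condition in $\Omega^2$). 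The consecutive families $\eta_i$ and $\eta_{i+1}$ agree in cohomology at the overlap point $t_{i+1}$, and the paper glues them using the theorem that two \emph{cohomologous} symplectic forms on a rational $4$-manifold are symplectomorphic; composing the resulting diffeomorphisms and pulling back produces a single smooth family from $\omega_{p,q}=\eta_0(0)$ to some $\omega_1=\Phi^\ast\eta_k(1)$ with $K_{\omega_1}\cdot\omega_1=K_{\widetilde\omega}\cdot\widetilde\omega<0$. So the paper's heavy external input is the cohomologous-uniqueness theorem for rational $4$-manifolds, whereas yours is the chamber structure of the symplectic cone; both are deep facts specific to the rational case. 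Two cautions on your write-up: putting $K_{\omega_{p,q}}$ into the standard form $-3H+\sum E_i$ already presupposes that all symplectic canonical classes on $\mathbb{CP}^2\#n\overline{\mathbb{CP}}^2$ lie in a single $\mathrm{Diff}$-orbit, which is itself a Li--Liu result; and your final ``Moser'' step is misnamed---Moser's trick turns a path of \emph{forms} into an isotopy, not a path of \emph{classes} into a path of forms. The lifting you actually need is essentially the piecewise openness-and-compactness construction the paper carries out (or an inflation argument), not a formal consequence of Moser.
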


We first construct certain families of symplectic structures on $X_{p,q}$ in Lemma~\ref{lemma:A}. Since $X_{p,q}$ is assumed to be rational, there is a symplectic structure $\widetilde{\omega}$ on $X_{p,q}$ such that $\widetilde{K} \cdot \widetilde{\omega} < 0$ by Proposition~\ref{proposition:McDuff-Salamon}. For each $t \in [0,1]$, we define a closed $2$-form $\lambda(t)$ by
\begin{equation*}
\lambda(t) = (1-t) \omega_{p,q} + t \widetilde{\omega}.
\end{equation*}
The set of the closed $2$-forms $\lambda(t)$ forms a line segment in the space of closed $2$-forms $\Omega^2(X_{p,q})$ on $X_{p,q}$, which is denoted by
\begin{equation*}
L_{\lambda} = \{\lambda(t) \mid 0 \le t \le 1 \}.
\end{equation*}
We denote by $\theta(t)=[\lambda(t)]$ the cohomology class of $\lambda(t)$ in $H^2(X_{p,q};\mathbb{R})$.

\begin{lemma}\label{lemma:A}
There is an increasing sequence $0 = t_0 < t_1 < \dotsb < t_k < t_{k+1}=1$ and, for each $i=0, \dotsc, k$, there is a family of symplectic structures $\eta_i(s)$ ($t_i \le s \le t_{i+1}$) such that
\begin{enumerate}[(i)]
\item $\eta_i(s)$ varies smoothly on $s \in [t_i,t_{i+1}]$,
\item $[\eta_i(s)] = \theta(s)$ for all $s \in [t_i,t_{i+1}]$,
\item in particular, $\eta_0(0)=\omega_{p,q}$.
\item $K_{\eta_k(1)} \cdot \eta_k(1) < 0$
\end{enumerate}
\end{lemma}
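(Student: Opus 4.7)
The plan is to cover the interval $[0,1]$ by finitely many closed subintervals and construct on each one a smooth family of symplectic forms representing the prescribed cohomology classes $\theta(s)$. Two ingredients suffice: every class $\theta(s)$ must admit at least one symplectic representative, and nearby classes must admit smoothly varying symplectic representatives.

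For the first ingredient, I would invoke Li--Liu's description of the symplectic cone $\mathcal{C}_{X_{p,q}} \subset H^2(X_{p,q};\mathbb{R})$: since $X_{p,q}$ is rational with $b^+ = 1$, this cone is convex, being the intersection of one component of the positive cone $\{\alpha^2 > 0\}$ (itself convex by the reverse Cauchy--Schwarz inequality for a Lorentzian form) with the open half-spaces $\{\alpha \cdot E > 0\}$ cut out by the exceptional classes. Since $[\omega_{p,q}]$ and $[\widetilde{\omega}]$ both lie in $\mathcal{C}_{X_{p,q}}$, so does the entire line segment $\theta([0,1])$.

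For the second ingredient, fix a Riemannian metric on $X_{p,q}$ and let $\mathcal{H} \colon H^2(X_{p,q};\mathbb{R}) \to \Omega^2(X_{p,q})$ denote the harmonic-representative map. Given $s^* \in [0,1]$ and any symplectic form $\omega^*$ with $[\omega^*] = \theta(s^*)$, the family
\[
\eta_{s^*}(s) = \omega^* + \mathcal{H}\bigl(\theta(s) - \theta(s^*)\bigr)
\]
is a smooth path of closed $2$-forms with $[\eta_{s^*}(s)] = \theta(s)$ and $\eta_{s^*}(s^*) = \omega^*$; by openness of the symplectic condition in $\Omega^2$, $\eta_{s^*}(s)$ remains symplectic on some open neighborhood $V_{s^*}$ of $s^*$ in $[0,1]$. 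Selecting $\omega^* = \omega_{p,q}$ at $s^* = 0$ ensures $\eta_0(0) = \omega_{p,q}$, while selecting $\omega^* = \widetilde{\omega}$ at $s^* = 1$ forces $\eta_k(1) = \widetilde{\omega}$, so that $K_{\eta_k(1)} \cdot \eta_k(1) = \widetilde{K} \cdot \widetilde{\omega} < 0$.

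By compactness of $[0,1]$ the cover $\{V_{s^*}\}$ admits a finite subcover, from which one extracts the partition $0 = t_0 < t_1 < \cdots < t_{k+1} = 1$ (with each intermediate $t_i$ chosen in a pairwise overlap) together with the local families $\eta_i$ satisfying (i)--(iv). The main obstacle in this scheme is the appeal to the convexity of the symplectic cone of a rational $4$-manifold via Li--Liu; once this is granted, the rest is standard openness and compactness.
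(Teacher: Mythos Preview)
Your argument is correct and follows essentially the same scheme as the paper: pick a symplectic representative of $\theta(t)$ at each $t$, extend it to a smooth local family in the correct cohomology classes by openness of the symplectic condition, then pass to a finite subcover by compactness of $[0,1]$, anchoring the endpoints at $\omega_{p,q}$ and $\widetilde{\omega}$. The only substantive difference is that you justify the existence of a symplectic representative of every $\theta(t)$ via Li--Liu's convexity of the symplectic cone, whereas the paper simply asserts ``one may choose an exact form $d\alpha(t)$ such that $\lambda(t)+d\alpha(t)$ is symplectic,'' and you build the local family with harmonic correction terms rather than by translating the segment $L_\lambda$ by $d\alpha(t)$; both variants work.
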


\begin{proof}
Fix $t \in [0,1]$. One may choose an exact form $d\alpha(t)$ such that $\lambda(t)+d\alpha(t)$ is a symplectic form which is cohomologous to $\lambda(t)$; that is, $[\lambda(t)+d\alpha(t))]=[\lambda(t)]=\theta(t)$. In particular, we take $d\alpha(0)$ by $d\alpha(0)=0$ because $\lambda(0)=\omega_{p,q}$ is already a symplectic form. We then translate the line $L_{\lambda}$ parallelly by $d\alpha(t)$ so that we get another line segment in $\Omega^2(X_{p,q})$:
\begin{equation*}
L_{\lambda}+d\alpha(t) = \{\lambda(s)+d\alpha(t) \mid 0 \le s \le 1\}.
\end{equation*}

For $s \in [0,1]$, let $\eta(s) = \lambda(s) + d\alpha(t)$. Notice that the space of symplectic forms on $X_{p,q}$ is open in $\Omega^2(X_{p,q})$; cf. Li~\cite[Lemma~2.2]{Li-2008}. So there is an open neighborhood of the symplectic form $\eta(t)$ in $\Omega^2(X_{p,q})$ which consists only of symplectic forms on $X_{p,q}$. Therefore, since $\eta(t)$ is a symplectic form, a small segment containing $\eta(t)$ (say $\Omega(t)$) of the line $L_{\lambda}+d\alpha(t)$ is contained in the open neighborhood. Explicitly, there is a positive number $\epsilon_t$ and an open interval $I(t) \subset [0,1]$ such that $\eta(s)$ is a symplectic form for any $s \in I(t)$, where $I(t)$ is defined by $I(t)=(t-\epsilon_t, t+\epsilon_t)$ for $t \neq 0$; or $I(0)=[0,\epsilon_0)$; or $I(1)= (1-\epsilon_1, 1]$. We then define explicitly $\Omega(t)$ again by
\begin{equation*}
\Omega(t) = \{\eta(s) \in L_{\lambda}+d\alpha(t) \mid s \in I(t)\}.
\end{equation*}
Then $\Omega(t)$ is a family of symplectic forms $\eta(s)$ varying smoothly on $s \in I(t)$ such that $[\eta(s)]=\theta(s)$.

However the unit interval $[0,1]$ is compact. So there are finitely many intervals $I(t)$'s that cover $[0,1]$. Hence one may choose an increasing finite sequence $0=t_0 < t_1 < \dotsb < t_k < t_{k+1}=1$ such that each interval $[t_i, t_{i+1}]$ ($i=0, \dotsc, k$) is contained in an interval $I(t)$ for some $t$. In particular, we may choose $t_1$ and $t_k$ so that  $[t_0, t_1] \subset I(0)$ and $[t_k, t_{k+1}] \subset I(1)$, respectively.  For each interval $[t_i, t_{i+1}]$, we choose only one such interval $I(t)$. Obviously, we choose $I(0)$ for $[t_0, t_1]$ and $I(1)$ for $[t_k, t_{k+1}]$. We then define a subfamily $\Omega_i$ of $\Omega(t)$ by
\begin{equation*}
\Omega_i = \{\eta(s) \in \Omega(t) \mid s \in [t_i, t_{i+1}]\}.
\end{equation*}
We denote a symplectic form $\eta(s) \in \Omega_i$ by $\eta_i(s)$. Then $\eta_i$ is the desired family; that is, each $\eta_i(s)$ is a symplectic form that varies smoothly on $s \in [t_i, t_{i+1}]$ such that $[\eta_i(s)]=\theta(s)$ for all $s \in [t_i, t_{i+1}]$.
\end{proof}

\begin{lemma}\label{lemma:B}
Two symplectic forms $\eta_i(t_{i+1})$ and $\eta_{i+1}(t_{i+1})$ in Lemma~\ref{lemma:A} are symplectictomorphic. That is, there exists a orientation-preserving diffeomorphism $\phi \colon X_{p,q} \to X_{p,q}$ such that $\phi^{\ast}(\eta_{i+1}(t_{i+1}))=\eta_i(t_{i+1})$.
\end{lemma}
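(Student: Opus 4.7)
The plan is to reduce the statement to the Lalonde–McDuff uniqueness theorem for symplectic forms on rational 4-manifolds, followed by Moser's stability theorem.

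First, I would observe that the two symplectic forms $\eta_i(t_{i+1})$ and $\eta_{i+1}(t_{i+1})$ are cohomologous. Indeed, by Lemma~\ref{lemma:A}(ii) applied to both subfamilies, we have
\begin{equation*}
[\eta_i(t_{i+1})] = \theta(t_{i+1}) = [\eta_{i+1}(t_{i+1})] \in H^2(X_{p,q};\mathbb{R}).
\end{equation*}
Equivalently, their difference is exact: writing $\eta_i(t_{i+1}) = \lambda(t_{i+1}) + d\alpha_i$ and $\eta_{i+1}(t_{i+1}) = \lambda(t_{i+1}) + d\alpha_{i+1}$ as in the construction of Lemma~\ref{lemma:A}, we have $\eta_{i+1}(t_{i+1}) - \eta_i(t_{i+1}) = d(\alpha_{i+1}-\alpha_i)$.

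Next I would invoke the theorem of Lalonde–McDuff (building on earlier work of Gromov and McDuff) which asserts that on any rational (or ruled) symplectic 4-manifold, any two cohomologous symplectic forms are isotopic, i.e., can be connected by a smooth path of symplectic forms all representing the same cohomology class. Since $X_{p,q}$ is assumed to be rational, this theorem applies and yields a smooth path $\{\sigma_u\}_{u \in [0,1]}$ of symplectic forms on $X_{p,q}$ with $\sigma_0 = \eta_i(t_{i+1})$, $\sigma_1 = \eta_{i+1}(t_{i+1})$, and $[\sigma_u] = \theta(t_{i+1})$ for every $u$.

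Finally, Moser's stability theorem applied to the path $\{\sigma_u\}$ produces an isotopy of orientation-preserving diffeomorphisms $\phi_u \colon X_{p,q} \to X_{p,q}$ with $\phi_0 = \mathrm{id}$ and $\phi_u^{\ast}\sigma_u = \sigma_0$ for every $u \in [0,1]$. Setting $\phi := \phi_1$ then gives $\phi^{\ast}(\eta_{i+1}(t_{i+1})) = \eta_i(t_{i+1})$, as required.

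The main obstacle is of course the appeal to the Lalonde–McDuff uniqueness theorem, which is nontrivial and where the hypothesis that $X_{p,q}$ is rational enters in an essential way; for general symplectic 4-manifolds cohomologous symplectic forms need not be isotopic. Once this deep input is available, the remainder of the argument is a routine application of Moser's trick.
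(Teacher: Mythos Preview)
Your proof is correct and follows essentially the same approach as the paper: the paper simply invokes the well-known fact (citing Li~\cite[Theorem~3.18]{Li-2008}) that two cohomologous symplectic forms on a rational 4-manifold are symplectomorphic, which is exactly the Lalonde--McDuff uniqueness plus Moser argument you spell out. Your version is just a more detailed unpacking of the same black box.
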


\begin{proof}
This lemma is a simple consequence of the well-known fact that two cohomologous symplectic forms on a \emph{rational} 4-manifolds are symplectomorphic (cf. Li~\cite[Theorem~3.18]{Li-2008}).
\end{proof}

The symplectic structures in Proposition~\ref{proposition:Key-proposition} may be constructed by gluing piece by piece the symplectic structures in Lemma~\ref{lemma:A}. Recall that two symplectic forms are \emph{deformations equivalent} if they are connected by a smooth path of symplectic forms and the deformation equivalence relation is an equivalence relation;  cf. Li~\cite[p.4]{Li-2008}.

\begin{proof}[Proof of Proposition~\ref{proposition:Key-proposition}]
We would like to construct a smooth family $\omega_t$ of symplectic structures on $X_{p,q}$ starting with $(X_{p,q}, \omega_{p,q})$ and ending with $(X_{p,q}, \omega_1)$ with $K_{\omega_1} \cdot \omega_1 < 0$.

By Lemma~\ref{lemma:A}, for each $t_j \le t \le t_{j+1}$, there is a smooth family of symplectic structures $\beta_j(t)$ such that $\beta_j(t_j)=\eta_j(t_j)$ and $\beta_j(t_{j+1})=\eta_j(t_{j+1})$. By Lemma~\ref{lemma:B}, there exists a orientation-preserving diffeomorphism $\phi_j \colon X_{p,q} \to X_{p,q}$ such that $\phi_j^{\ast}(\eta_j(t_j))=\eta_{j-1}(t_j)$. That is, $(\phi_j^{-1})^{\ast}(\eta_{j-1}(t_j)) = \eta_j(t_j)$. In particular, $(\phi_1^{-1})^{\ast}(\beta_0(0))$ is deformation-equivalent to $(\phi_1^{-1})^{\ast}(\beta_0(t_1))(=\eta_1(t_1))$ along $(\phi_1^{-1})^{\ast}(\beta_0(t))$. Therefore $(\phi_1^{-1})^{\ast}(\eta_0(0))$ is deformation-equivalent to $(\phi_1^{-1})^{\ast}(\eta_0(t_1))(=\eta_1(t_1))$ along $(\phi_1^{-1})^{\ast}(\beta_0(t))$ ( $t_0 \le t \le t_1$) and $\eta_1(t_1)$ is deformation-equivalent to $\eta_1(t_2)$ along $\beta_1(t)$ ($t_1 \le t \le t_2$). Since the deformation-equivalent relation is an equivalence relation, we have
\begin{equation*}
(\phi_1^{-1})^{\ast}(\eta_0(0)) \sim \eta_1(t_2)
\end{equation*}
where $\sim$ means that they are deformation-equivalent.

Repeat this argument, we may conclude that there is an orientation-preserving diffeomorphism $\Phi^{-1} \colon X_{p,q} \to X_{p,q}$ (that is given by the composition of $\phi_j^{-1}$'s) such that $(\Phi^{-1})^{\ast}(\eta_0(0))$ is deformation-equivalent to $\eta_k(1)$. Set $\omega_1 = \Phi^{\ast}(\eta_k(1))$. Then $\eta_0(0)(=\omega_{p,q})$ is deformation-equivalent to $\omega_k$. Since $K_{\eta_k(1)} \cdot \eta_k(1) < 0$, we have $K_{\omega_1} \cdot \omega_1 < 0$ as desired.
\end{proof}

\section{Rational symplectic 4-manifolds after rational blow-down surgeries}\label{section:main-theorem}

We prove that if a symplectic 4-manifold $X$ becomes a rational 4-manifold after applying a symplectic rational blow-down surgery, then $X$ is originally rational; Theorem~\ref{theorem:Main}. In order to show rationality, we apply the following well-known fact:

\begin{proposition}[{McDuff-Salamon~\cite[Corollary~1.4]{McDuff-Salamon-1996}}]\label{proposition:McDuff-Salamon}
Let $X$ be a minimal symplectic 4-manifold with $b_2^+=1$. The following are equivalent.

    \begin{enumerate}[(a)]
    \item The 4-manifold $X$ admits a symplectic structure $\omega$ with $K \cdot \omega < 0$.

    \item The 4-manifold $X$ is either rational or ruled.
    \end{enumerate}
\end{proposition}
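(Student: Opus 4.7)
The plan is to prove the two implications separately, with (b) $\Rightarrow$ (a) elementary and (a) $\Rightarrow$ (b) the substantive direction requiring Taubes' Seiberg--Witten machinery. Throughout one uses that on a minimal symplectic $4$-manifold with $b_2^+ = 1$ the cohomology class of $\omega$ lies in a definite forward cone of the intersection form.

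For (b) $\Rightarrow$ (a) I would simply exhibit symplectic forms with the desired property on each minimal model. On $\mathbb{CP}^2$ the Fubini--Study form together with $K = -3 H$ gives $K \cdot \omega_{FS} < 0$ directly. On the remaining minimal rational model $S^2 \times S^2$, a product form $a F_1 + b F_2$ with $a, b > 0$ works, since $K = -2 F_1 - 2 F_2$. On a minimal ruled surface $\pi \colon X \to \Sigma_g$ with $g \ge 1$ one has $K \cdot F = -2$ on the fiber, so a K\"ahler form with sufficiently large fiber area pairs negatively with $K$.

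For (a) $\Rightarrow$ (b) I would argue by contrapositive: suppose $X$ is minimal with $b_2^+ = 1$ but neither rational nor ruled, and aim to conclude that $K \cdot \omega \ge 0$ for every symplectic $\omega$. Assume toward a contradiction that $K \cdot \omega < 0$. Taubes' theorem says that the canonical Spin$^c$-structure carries Seiberg--Witten invariant $\pm 1$, in the chamber determined by $\omega$; the sign of $K \cdot \omega$ pins down that chamber. The $b_2^+ = 1$ wall-crossing formula then forces nontrivial Seiberg--Witten invariants on infinitely many other Spin$^c$-structures, and Taubes' SW $=$ Gr correspondence translates these into embedded pseudo-holomorphic curves in prescribed homology classes, in particular with controlled pairing against $\omega$.

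The step I expect to be the main obstacle is to extract from this family of curves an embedded symplectic sphere of self-intersection $\ge 0$. Positivity of intersections, the adjunction inequality, and the minimality hypothesis on $X$ must be combined to exclude multiply-covered and reducible configurations and to bound the geometric genus of the curves produced. Once such a sphere is in hand, McDuff's classical theorem that a minimal symplectic $4$-manifold containing an embedded symplectic sphere of non-negative self-intersection is, up to symplectic deformation, either $\mathbb{CP}^2$, $S^2 \times S^2$, or an $S^2$-bundle over some $\Sigma_g$ forces $(X, \omega)$ to be rational or ruled, contradicting the standing assumption and completing the contrapositive.
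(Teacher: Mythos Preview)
The paper does not supply a proof of this proposition at all: it is quoted verbatim as Corollary~1.4 of McDuff--Salamon and used as a black box, so there is nothing in the paper to compare your argument against. Your task description asked you to reconstruct the paper's proof, but here there is none; the authors simply import the result.

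That said, your sketch is a fair outline of how the result is actually established in the literature. The direction (b)$\Rightarrow$(a) is handled correctly. For (a)$\Rightarrow$(b), the route through Taubes' nonvanishing theorem, the $b_2^+=1$ wall-crossing formula, and McDuff's sphere theorem is the standard one (this is essentially Liu's theorem). You are right to flag the extraction of an embedded symplectic sphere of nonnegative self-intersection as the crux: in the actual argument one does not quite produce such a sphere directly from SW $=$ Gr, but rather shows that $K_\omega^2 < 0$ together with $K\cdot\omega<0$ and minimality forces the existence of a class of negative square with nontrivial invariant, leading (via adjunction and positivity of intersections) to a $(-1)$-sphere or a nonnegative sphere. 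Your plan is on the right track but stops short of the combinatorial/numerical analysis that makes this step go through; as written it is a plausible roadmap rather than a proof.
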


The above Proposition~\ref{proposition:McDuff-Salamon} extends to the non-minimal case. Then the last condition (ii) in the above proposition reads as follows: The 4-manifold $X$ is a blow up of a rational (or ruled) 4-manifold.

\begin{proposition}[{Gay-Stipsicz~\cite[Lemma~3.3]{Gay-Stipsicz-2009}}]\label{proposition:Gay-Stipsicz}
Suppose that the bilinear form $\bilinear{x,y}$ is given by the negative definite symmetric matrix $Q$ with only nonnegative off-diagonals in the basis $\{E_i\}$. If for a vector $x$ the inequalities $\bilinear{x,E_i} \le 0$ ($i=1,\dotsc,n$) are all satisfied, then all coordinates of $x$ are nonnegative.
\end{proposition}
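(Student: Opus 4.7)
The plan is to argue by contradiction using a sign decomposition of $x$. Write $x = \sum_i x_i E_i$ and let $S = \{i : x_i < 0\}$. Suppose for contradiction that $S \ne \emptyset$, and split
\begin{equation*}
x = x_+ - x_-, \qquad x_+ = \sum_{i \notin S} x_i E_i, \qquad x_- = -\sum_{i \in S} x_i E_i,
\end{equation*}
so that both $x_+$ and $x_-$ have all nonnegative coordinates, their supports are disjoint, and $x_- \ne 0$.

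I would then compute $\langle x, x_- \rangle$ in two different ways. First, expanding $x_-$ in the basis and using the hypothesis $\langle x, E_i \rangle \le 0$ together with $(x_-)_i \ge 0$ gives
\begin{equation*}
\langle x, x_- \rangle = \sum_i (x_-)_i \langle x, E_i \rangle \le 0.
\end{equation*}
Second, writing $\langle x, x_- \rangle = \langle x_+, x_- \rangle - \langle x_-, x_- \rangle$ and expanding the first term as $\sum_{i,j}(x_+)_i (x_-)_j Q_{ij}$, the disjointness of supports kills all diagonal contributions, and the surviving off-diagonal terms are nonnegative by the hypothesis on $Q$; hence $\langle x_+, x_- \rangle \ge 0$. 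Negative definiteness of $Q$ together with $x_- \ne 0$ gives $\langle x_-, x_- \rangle < 0$, so $\langle x, x_- \rangle > 0$, contradicting the first computation. Therefore $S = \emptyset$, as claimed.

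The argument is essentially a one-shot computation, so I do not expect a real obstacle. The only point requiring care is that the off-diagonal-sign hypothesis is applied to $\langle x_+, x_- \rangle$, where the disjoint-support trick is exactly what makes the diagonal entries of $Q$ (whose sign is unconstrained) irrelevant; this is why the splitting is done along the sign pattern of $x$ rather than in some other way.
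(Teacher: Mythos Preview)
Your argument is correct: the sign decomposition $x = x_+ - x_-$ and the two evaluations of $\langle x, x_- \rangle$ yield a clean contradiction, and you use each hypothesis (nonnegative off-diagonals, negative definiteness, $\langle x, E_i\rangle \le 0$) exactly where it is needed.

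There is nothing to compare against, however: the paper does not supply its own proof of this proposition. It is simply quoted from Gay--Stipsicz \cite[Lemma~3.3]{Gay-Stipsicz-2009} and used as a black box to deduce Lemma~\ref{lemma:P^-1<=0}. Your write-up would serve perfectly well as a self-contained proof if one wished to include it.
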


As a simple application:

\begin{lemma}\label{lemma:P^-1<=0}
Let $P$ be a negative definite $n \times n$ matrix. Let $x$ be a $n \times 1$ column matrix with $x \ge 0$, that is, all of entries of $x$ are non-negative. Then $P^{-1} x \le 0$, that is, all entries of $P^{-1} x$ are non-positive.
\end{lemma}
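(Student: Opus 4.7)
The plan is to reduce directly to Proposition~\ref{proposition:Gay-Stipsicz}. For that proposition to apply, one must read $P$ as the matrix of a symmetric bilinear form with only nonnegative off-diagonal entries in some basis; this extra structure is implicit in the statement of the lemma (it is going to be applied to intersection forms of linear chains $C_{p,q}$, for which the off-diagonal entries are $0$ or $1$, so the assumption is automatic in context).

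First I would set $y = P^{-1} x$, so that the target inequality $P^{-1} x \le 0$ becomes simply $y \le 0$. The only concrete input is the defining relation $Py = x$, and the natural way to make use of it is to convert it into a system of statements about pairings against basis vectors. Concretely, taking the standard basis $\{e_i\}$ of $\mathbb{R}^{n}$ and the bilinear form $\bilinear{u,v} = u^{T} P v$ (whose Gram matrix in this basis is $P$ itself), I would compute
\begin{equation*}
\bilinear{-y, e_i} = -e_i^{T} P y = -(Py)_i = -x_i \le 0 \qquad (i=1,\dotsc,n),
\end{equation*}
where the final inequality uses the hypothesis $x \ge 0$. At this point the hypothesis of Proposition~\ref{proposition:Gay-Stipsicz} is verified for the vector $-y$, and its conclusion says that every coordinate of $-y$ is nonnegative, that is, $y \le 0$, which is exactly $P^{-1} x \le 0$.

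The only real ``obstacle'' is a translation between languages: Gay--Stipsicz is phrased in terms of bilinear pairings, while the lemma is phrased in terms of matrix inverses. Once one identifies the matrix-vector equation $Py = x$ with the system of pairings $\bilinear{y, e_i} = x_i$, the lemma follows with no further work. There is no combinatorial or analytic content beyond this bookkeeping.
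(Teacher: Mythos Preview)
Your proof is correct and follows essentially the same approach as the paper: both define the bilinear form $\bilinear{u,v}=u^{T}Pv$ and invoke Proposition~\ref{proposition:Gay-Stipsicz}. The only cosmetic difference is that the paper applies the proposition to each column $E_i^{*}$ of $-P^{-1}$ (thereby showing all entries of $P^{-1}$ are non-positive, then deducing $P^{-1}x\le 0$), whereas you apply it once directly to $-y=-P^{-1}x$.
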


\begin{proof}
We define a bilinear form $\bilinear{x,y}$ by $\bilinear{x,y} = x^t P y$. Let $E_i^*$ be the $i$-th column of the matrix $-P^{-1}$. Then $\bilinear{E_i^*, E_j}=-\delta_{ij}$ for all $j=1,\dotsc,n$ because $(-P^{-1})P=-I_n$. By Propostion~\ref{proposition:Gay-Stipsicz}, all cooordinates of $E_i^*$ are nonnegative. Hence all entries of $P^{-1}$ are non-positive. So if $x \ge 0$, then $P^{-1} x \le 0$, as asserted.
\end{proof}

\begin{proposition}[{J. Park~\cite[p.365]{JPark-1997}}]
Let $P$ be the plumbing matrix for $C_{p,q}$ with respect to the basis $\{u_i\}$. Then the intersection form on $H^2(C_{p,q};\mathbb{Q})$ with respect to the dual basis $\{r_i\}$ (i.e., $\langle r_i, u_j \rangle = \delta_{ij}$) is given by $Q := (r_i \cdot r_j) = P^{-1}$.
\end{proposition}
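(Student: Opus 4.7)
The plan is to reduce the proposition to a short matrix computation after invoking Poincar\'e--Lefschetz duality. The main obstacle is cosmetic: $C_{p,q}$ has nonempty boundary, so one cannot naively use the closed-manifold form of Poincar\'e duality. However, $\partial C_{p,q} = L(p^2, pq-1)$ is a rational homology sphere, so over $\mathbb{Q}$ the pair behaves as in the closed case in the relevant degree.

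To set up the duality, I would use the long exact sequence of the pair $(C_{p,q}, \partial C_{p,q})$ with rational coefficients, combined with $H_*(\partial C_{p,q};\mathbb{Q}) \cong H_*(S^3;\mathbb{Q})$, to conclude that the inclusion induces an isomorphism $H_2(C_{p,q};\mathbb{Q}) \xrightarrow{\cong} H_2(C_{p,q}, \partial C_{p,q};\mathbb{Q})$. Composing with Poincar\'e--Lefschetz duality $H_2(C_{p,q}, \partial C_{p,q};\mathbb{Q}) \cong H^2(C_{p,q};\mathbb{Q})$ yields an isomorphism $\mathrm{PD} \colon H_2(C_{p,q};\mathbb{Q}) \to H^2(C_{p,q};\mathbb{Q})$ that intertwines the homological intersection pairing with the cup-product pairing on cohomology. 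Writing $\hat{u}_i := \mathrm{PD}(u_i)$, this simultaneously gives $\langle \hat{u}_i, u_j \rangle = u_i \cdot u_j = P_{ij}$ and $\hat{u}_k \cdot \hat{u}_l = u_k \cdot u_l = P_{kl}$.

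With this identification in hand, the proposition reduces to linear algebra. Expanding the dual basis in the $\hat{u}$-basis as $r_i = \sum_k A_{ik}\,\hat{u}_k$, the defining condition $\langle r_i, u_j \rangle = \delta_{ij}$ becomes $\sum_k A_{ik} P_{kj} = \delta_{ij}$, so $AP = I$ and $A = P^{-1}$. A direct computation then gives $r_i \cdot r_j = \sum_{k,l} A_{ik} A_{jl}(\hat{u}_k \cdot \hat{u}_l) = \sum_{k,l} (P^{-1})_{ik}\, P_{kl}\, (P^{-1})_{jl} = (P^{-1} P (P^{-1})^T)_{ij}$, and since $P$ (hence $P^{-1}$) is symmetric, this collapses to $(P^{-1})_{ij}$, which is the claim. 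Everything past the duality identification in the first step is bookkeeping, so the only place where geometry is really used is the appeal to $\partial C_{p,q}$ being a $\mathbb{Q}$-homology sphere.
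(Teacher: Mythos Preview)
Your argument is correct: the Poincar\'e--Lefschetz duality step is legitimate because $\partial C_{p,q}$ is a rational homology sphere, and the subsequent matrix computation $A P A^T = P^{-1} P (P^{-1})^T = P^{-1}$ (using symmetry of $P$) is clean. The paper itself does not prove this proposition; it simply quotes the result from J.~Park~\cite[p.~365]{JPark-1997}, so there is no in-paper argument to compare against. Your proof supplies exactly the standard justification one would expect for such a citation.
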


\begin{theorem}\label{theorem:Main}
Let $(X,\omega_X)$ be a symplectic 4-manifold. Suppose that a configuration $C_{p,q}$ is symplectically embedded in $X$; that is, all 2-spheres $u_i$ in $C_{p,q}$ are symplectically embedded in $X$ and they intersect positively. Let $(X_{p,q}, \omega_{p,q})$ be the symplectic rational blow-down of $(X,\omega_X)$ along $C_{p,q}$. If $X_{p,q}$ is rational, then so is $X$.
\end{theorem}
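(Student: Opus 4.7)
The plan is to exhibit a symplectic form $\omega$ on $X$ with $K_{\omega} \cdot \omega < 0$ and then invoke the non-minimal extension of Proposition~\ref{proposition:McDuff-Salamon}. A Mayer--Vietoris argument, using that $L(p^2,pq-1)$ is a rational homology sphere, that $B_{p,q}$ is a rational homology ball, and that $C_{p,q}$ is simply connected, shows $b_1(X) = b_1(X_{p,q}) = 0$ and $b_2^+(X) = b_2^+(X_{p,q}) = 1$. Thus the irrationally ruled branch of the conclusion is excluded, so producing such an $\omega$ will force $X$ to be rational.

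The key algebraic input comes from the same Mayer--Vietoris, which yields an orthogonal splitting
\begin{equation*}
H^2(X;\mathbb{Q}) \;\cong\; H^2(X_{p,q};\mathbb{Q}) \,\oplus\, \bigl\langle r_1,\dots,r_r \bigr\rangle,
\end{equation*}
on which the intersection form on the second summand is $Q = P^{-1}$. Using the symplectic adjunction formula $K_X \cdot u_i = b_i - 2$ for each symplectic sphere $u_i$, I would decompose
\begin{equation*}
K_X \;=\; K_{\omega_{p,q}} + \sum_{i=1}^{r}(b_i - 2)\, r_i, \qquad [\omega_X] \;=\; [\omega_{p,q}] + \sum_{i=1}^{r} a_i\, r_i,
\end{equation*}
where $a_i = \omega_X \cdot u_i > 0$ is the symplectic area of $u_i$. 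Setting $c = (b_i - 2)_i \ge 0$ and $a = (a_i)_i > 0$, orthogonality of the splitting together with Lemma~\ref{lemma:P^-1<=0} gives
\begin{equation*}
K_{\omega_X} \cdot \omega_X \;=\; K_{\omega_{p,q}} \cdot \omega_{p,q} + c^{t} Q\, a \;\le\; K_{\omega_{p,q}} \cdot \omega_{p,q}.
\end{equation*}

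In the easy case $K_{\omega_{p,q}} \cdot \omega_{p,q} < 0$ the display above immediately yields $K_{\omega_X} \cdot \omega_X < 0$ and we are done. In the remaining case $K_{\omega_{p,q}} \cdot \omega_{p,q} \ge 0$, I would apply Proposition~\ref{proposition:Key-proposition} to get a smooth path $\omega_t$ of symplectic forms on $X_{p,q}$ joining $\omega_{p,q}$ to some $\omega_1$ with $K_{\omega_1} \cdot \omega_1 < 0$, and then transport the symplectic rational homology ball $B_{p,q} \subset (X_{p,q}, \omega_{p,q})$ through this family so as to produce a symplectic embedding of $B_{p,q}$ into $(X_{p,q}, \omega_1)$ lying in the original smooth isotopy class. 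Symplectic rational blow-up of $(X_{p,q}, \omega_1)$ along this embedded ball then produces a symplectic form $\omega$ on $X$, and since the self-intersections of the new $u_i$ are still $-b_i$ the same decomposition applies and gives $K_{\omega} \cdot \omega = K_{\omega_1} \cdot \omega_1 + c^{t} Q\, a' < 0$, as required.

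The step I expect to be the main obstacle is this transport of $B_{p,q}$ along the path $\omega_t$: maintaining both symplecticity and the correct smooth isotopy class as $t$ runs from $0$ to $1$. Locally it is an openness argument for the symplectic condition on a submanifold, and my plan is to globalize it by a cover-and-concatenate construction patterned on Lemma~\ref{lemma:A}, invoking Lemma~\ref{lemma:B} to pass between consecutive subintervals via self-symplectomorphisms of $X_{p,q}$. With the transport in hand, the orthogonal decomposition, the sign analysis from Lemma~\ref{lemma:P^-1<=0}, and the Mayer--Vietoris computation of $b_1$ and $b_2^+$ assemble into the theorem.
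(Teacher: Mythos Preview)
Your overall strategy matches the paper's: the same orthogonal decomposition over $\mathbb{Q}$, the same inequality $K_{\omega_X}\cdot\omega_X \le K_{\omega_{p,q}}\cdot\omega_{p,q}$ obtained from adjunction on the $u_i$ together with Lemma~\ref{lemma:P^-1<=0}, the same two-case split, and the same appeal to Proposition~\ref{proposition:Key-proposition} in Case~2. Your explicit Mayer--Vietoris check that $b_1(X)=0$ and $b_2^+(X)=1$, ruling out the irrationally ruled branch of Proposition~\ref{proposition:McDuff-Salamon}, is a point the paper leaves implicit.

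The one substantive divergence is how Case~2 is finished. You propose to \emph{transport} the embedded $B_{p,q}$ along the path $\omega_t$ and correctly flag the isotopy-class issue as the main obstacle; note, however, that your plan to pass between subintervals via the symplectomorphisms of Lemma~\ref{lemma:B} does not obviously control that isotopy class, since those diffeomorphisms need not be smoothly isotopic to the identity. The paper sidesteps this problem entirely: it keeps $B_{p,q}$ fixed and instead manufactures a single symplectic form $\overline{\omega}$ on $X_{p,q}$ by stretching a collar $L\times[0,1]$ of $\partial B_{p,q}$ and setting $\overline{\omega}=\omega_0=\omega_{p,q}$ on $B_{p,q}$, $\overline{\omega}=\omega_1$ on $X_0=X_{p,q}\setminus B_{p,q}$, and $\overline{\omega}=\omega_t$ on the slice $L\times\{t\}$. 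Because the complement of $X_0$ is still a rational homology ball one gets $\overline{K}\cdot\overline{\omega}=K_{\omega_1}\cdot\omega_1<0$, and because $\overline{\omega}|_{B_{p,q}}=\omega_{p,q}|_{B_{p,q}}$ the Milnor fillable contact structure on $\partial B_{p,q}$ is unchanged; the symplectic rational blow-up then yields a manifold diffeomorphic to $X$ on the nose, with no isotopy tracking required. This collar-interpolation trick is cheaper than your transport and dissolves precisely the obstacle you identified.
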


\begin{proof}
By perturbing the almost complex structure $J$ on $X$ if necessary, we may assume that all $u_i$'s in $C_{p,q}$ are pseudoholomorphic curves. Since $B_{p,q}$ is a rational homology ball, we have
\begin{equation*}%\label{equation:K_X.omega_X}
K_X \cdot \omega_X = K_{X_{p,q}} \cdot \omega_{p,q} + K_X|_{C_{p,q}} \cdot \omega_X|_{C_{p,q}}.
\end{equation*}

We first show that
\begin{equation}\label{equation:K_X|C_pq.omega_X|C_pq<=0}
K_X|_{C_{p,q}} \cdot \omega_X|_{C_{p,q}} \le 0
\end{equation}

Notice that $K_X|_{C_{p,q}} = \sum (K_X \cdot u_i) r_i$ and $\omega_X|_{C_{p,q}} = \sum (\omega_X \cdot u_i) r_i$. Since each $u_i$ is a pseudoholomorphic curve, the adjunction inequality holds for $u_i$; hence we have
\begin{equation}\label{equation:K_X.u_i>=0}
K_X \cdot u_i = -2 - u_i^2 \ge 0
\end{equation}
On the other hand, $\omega_X \cdot u_i (= \int_{u_i} \omega_X)$ is the area of $u_i$ in $X$; hence we also have
\begin{equation}\label{equation:omega_X.u_i>0}
\omega_X \cdot u_i > 0
\end{equation}
Note that
\begin{equation*}
K_X|_{C_{p,q}} \cdot \omega_X|_{C_{p,q}} = \left( \sum (K_X \cdot u_i)r_i\right) P^{-1} \left( \sum (\omega_X \cdot u_j) r_j \right)
\end{equation*}
By Lemma~\ref{lemma:P^-1<=0}, $P^{-1}r_j \le 0$. Then $r_iP^{-1}r_j \le 0$. Therefore it follows by Equations~\eqref{equation:K_X.u_i>=0} and \eqref{equation:omega_X.u_i>0} that $K_X|_{C_{p,q}} \cdot \omega_X|_{C_{p,q}} \le 0$, as asserted.

\textit{Case 1}: $K_{X_{p,q}} \cdot \omega_{p,q} < 0$.

By Equation~\eqref{equation:K_X|C_pq.omega_X|C_pq<=0}, we have $K_X \cdot \omega_X < 0$. Hence $X$ is rational by Proposition~\ref{proposition:McDuff-Salamon}.

\textit{Case 2}: $K_{X_{p,q}} \cdot \omega_{p,q}  \ge 0$ although $X_{p,q}$ is rational.

By Proposition~\ref{proposition:Key-proposition}, there is a smooth family of symplectic structures $\omega_t$ ($t \in [0,1]$) on $X_{p,q}$ such that $\omega_0=\omega_{p,q}$ and $K_1 \cdot \omega_1 < 0$.

Let $X_0 = X_{p,q} - B_{p,q}$. Then $X_{p,q} = X_0 \cup_{\partial B_{p,q}} B_{p,q}$, where $\partial B_{p,q}$ is the Lens space $L(p^2,pq-1)$. One may  stretch the attachment part $L (=L(p^2,pq-1))$ out; that is,
\begin{equation*}
X_{p,q} = X_0 \bigcup_{L \times \{0\}} (L \times [0,1]) \bigcup_{L \times \{1\}} B_{p,q}.
\end{equation*}
We define a symplectic structure $\overline{\omega}$ on $X_{p,q}$ as follows: For a point $x \in X_{p,q}$,
\begin{equation*}
\overline{\omega}_x = \begin{cases}
(\omega_0)_x & \text{if $x \in B_{p,q}$}, \\
(\omega_t)_x & \text{if $x \in L \times (0,1)$}, \\
(\omega_1)_x & \text{if $x \in X_0$}
\end{cases}
\end{equation*}
If necessary, one can deform the above $\overline{\omega}$ slightly so that $\overline{\omega}$ varies smoothly. With this new symplectic structure $\overline{\omega}$ on $X_{p,q}$, the boundary $\partial B_{p,q}$ has a Milnor fillable contact structure. Furthermore note that
\begin{equation*}
\overline{K} \cdot \overline{\omega} < 0,
\end{equation*}
where $\overline{K}$ is the canonical class on $X_{p,q}$ associated to the symplectic structure $\overline{\omega}$. Indeed, $X_{p,q} - X_0 (= (L \times [0,1]) \bigcup_{L \times \{1\}} B_{p,q})$ is diffeomorphic to a rational homology ball $B_{p,q}$. So we have
\begin{equation*}
\overline{K} \cdot \overline{\omega} = K_1|_{X_0} \cdot \omega_1|_{X_0} = K_1 \cdot \omega_1 < 0.
\end{equation*}

We can apply a symplectic rational blow-up to $X_{p,q}$ because $\partial B_{p,q}$ has a Milnor fillable contact structure so that we get a symplectic 4-manifold $X'$. Since $\overline{K} \cdot \overline{\omega} < 0$, it is the same situation as Case (1); hence, $X'$ is rational. Therefore $X$ is also rational because $X$ is diffeomorphic to $X'$.
\end{proof}

It is not difficult to generalize the above theorem to the case that $X$ has multiple $C_{p,q}$'s.

\begin{corollary}\label{corollary:multiple-cpq}
Let $(X,\omega_X)$ be a symplectic 4-manifold. Suppose that $X$ contains disjoint symplectic $C_{p,q}$'s for various $p,q$'s. If the symplectic rational blow-down $Y$ of $X$ along the $C_{p,q}$'s is a rational symplectic 4-manifold, then $X$ is originally rational.
\end{corollary}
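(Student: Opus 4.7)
The plan is to induct on the number $n$ of disjoint configurations $C_{p_i,q_i}$ contained in $X$. The base case $n=1$ is exactly Theorem~\ref{theorem:Main}, so there is no work to do there. For the inductive step, suppose the statement has been established whenever at most $n-1$ disjoint configurations are blown down, and suppose that $(X,\omega_X)$ contains disjoint symplectic configurations $C_{p_1,q_1}, \dotsc, C_{p_n,q_n}$ whose simultaneous symplectic rational blow-down yields a rational symplectic 4-manifold $Y$.

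The key observation is that a symplectic rational blow-down is a local surgery: it is performed inside an arbitrarily small tubular neighborhood of the configuration being replaced. Hence I would first rationally blow down only the configuration $C_{p_1,q_1} \subset X$ to obtain a symplectic 4-manifold $(X^{(1)}, \omega^{(1)})$. Because the other configurations $C_{p_2,q_2}, \dotsc, C_{p_n,q_n}$ lie outside a neighborhood of $C_{p_1,q_1}$, they are undisturbed by the surgery and continue to sit inside $X^{(1)}$ as pairwise disjoint symplectically embedded configurations with respect to $\omega^{(1)}$. Now performing the symplectic rational blow-down of $X^{(1)}$ along these remaining $n-1$ configurations produces exactly the same symplectic 4-manifold $Y$ (up to symplectomorphism), since rational blow-downs along disjoint configurations commute.

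Applying the inductive hypothesis to $X^{(1)}$ equipped with its $n-1$ disjoint symplectic $C_{p_i,q_i}$'s ($i \geq 2$) whose rational blow-down is the rational manifold $Y$, we conclude that $X^{(1)}$ is itself rational. But then $(X, \omega_X)$ contains a single symplectic configuration $C_{p_1,q_1}$ whose rational blow-down $X^{(1)}$ is rational, and Theorem~\ref{theorem:Main} (the base case) immediately yields that $X$ is rational, completing the induction.

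I do not expect any serious obstacle here: the only point that might require a sentence of justification is the locality of the rational blow-down surgery and the ensuing commutativity of blow-downs along disjoint configurations, which is a standard feature of the Symington construction recalled in Section~\ref{section:rational-blow-down}. Once that is acknowledged, the argument reduces to a clean induction on $n$.
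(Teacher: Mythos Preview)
Your induction argument is correct and is precisely the straightforward generalization the paper has in mind; in fact the paper does not supply a separate proof for this corollary but merely remarks that ``it is not difficult to generalize the above theorem to the case that $X$ has multiple $C_{p,q}$'s,'' which your locality-plus-induction reasoning carries out cleanly.
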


\begin{remark}
It may be said that a symplectic rational blow-up of a rational symplectic $4$-manifold is again a rational symplectic $4$-manifold.
\end{remark}

\begin{remark}
A cyclic quotient surface singularity of type $\frac{1}{p^2}(1,pq-1)$ (which is called a \emph{singularity of class $T$}) has a Milnor fiber diffeomorphic to $B_{p,q}$. So a rational blow-down surgery may be regarded as a surgery that replaces a regular neighborhood of the collection of the exceptional 2-spheres of the cyclic quotient surface singularity of type $\frac{1}{p^2}(1,pq-1)$ with its Milnor fiber diffeomorphic to $B_{p,q}$.
\end{remark}

\section{Degenerations of rational complex surfaces}\label{section:degenerations}

Let $X$ be a normal projective surface over $\mathbb{C}$. Suppose that there is a flat deformation $f \colon \mathcal{X} \to \Delta$ of $X$ over a small disk $\Delta = \{t \in \mathbb{C} \mid \lvert t \rvert < \epsilon\}$ such that $X$ is the special fiber $X_0(=f^{-1}(0))$ and a general fiber $X_t=f^{-1}(t)$($t \neq 0$) is a smooth surface. In such a situation $X$ is called a \emph{degeneration} of the family $\{X_t \mid t \in \Delta-\{0\}\}$.

Bădescu~\cite{Badescu-1986} characterizes a degeneration of a family of rational surfaces:

\begin{proposition}[{Bădescu~\cite[Theorem~1]{Badescu-1986}}]\label{proposition:Badescu}
Let $X$ be a degeneration of a family $\{X_t \mid t \in \Delta-\{0\}\}$. Assume that $X_t$ is a smooth rational surface with $b_2(X_t) \le 10$ for some (and hence, for all) $t \in \Delta-\{0\}$). Let $\widetilde{X}$ is the minimal resolution of $X$. Then either (a) $\widetilde{X}$ is a rational surface and $X$ contains at most rational singularities; or (b) $\widetilde{X}$ is a ruled surface with irregularity $q > 0$ and $X$ contains precisely one non-rational singular point.
\end{proposition}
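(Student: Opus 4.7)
The plan is to translate the hypotheses into sharp numerical invariants of the minimal resolution $\widetilde X$ and then apply the Enriques--Kodaira classification together with a defect formula at the singularities of $X$. First I would extract the invariants of the general fibre. A smooth rational surface with $b_2(X_t) \le 10$ has $p_g(X_t) = q(X_t) = 0$, $\chi(\mathcal O_{X_t}) = 1$, and by Noether's formula $K_{X_t}^2 = 12\chi - e(X_t) = 10 - b_2(X_t) \ge 0$. Flatness of $f \colon \mathcal X \to \Delta$ propagates $\chi(\mathcal O_X) = \chi(\mathcal O_{X_t}) = 1$ to the central fibre. Since each $X_t$ is uniruled and uniruledness specializes in flat families (Koll\'ar), $X_0$ and hence $\widetilde X$ is uniruled, and by Miyaoka--Mori this forces $\kappa(\widetilde X) = -\infty$. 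Thus $\widetilde X$ is either rational or birationally ruled with $q(\widetilde X) > 0$.

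Next I would invoke the standard defect identity at the minimal resolution $\pi \colon \widetilde X \to X$,
\[
\chi(\mathcal O_X) - \chi(\mathcal O_{\widetilde X}) \;=\; \sum_{p \in \mathrm{Sing}(X)} h^1\bigl(\pi^{-1}(p),\, \mathcal O_{\pi^{-1}(p)}\bigr),
\]
in which each summand $\delta_p$ is nonnegative and vanishes precisely for rational singularities. Since $\kappa(\widetilde X) = -\infty$ gives $p_g(\widetilde X) = 0$, writing $\chi(\mathcal O_{\widetilde X}) = 1 - q(\widetilde X)$ reduces the identity to $q(\widetilde X) = \sum_p \delta_p$. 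In case (a), $q(\widetilde X) = 0$ forces every $\delta_p$ to vanish, so every singularity of $X$ is rational and $\widetilde X$ is rational. In case (b), $q(\widetilde X) > 0$ implies at least one non-rational singular point on $X$.

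The main obstacle will be showing that in case (b) there is \emph{exactly one} non-rational singular point. My strategy would be to factor the Albanese morphism of $\widetilde X$ as a fibration $\widetilde X \to B$ onto a smooth curve $B$ of genus $q(\widetilde X)$ with rational generic fibre, and to exploit that the positive-genus components inside the exceptional configuration over any non-rational singularity must map non-constantly onto $B$. Combining this with the identity $q(\widetilde X) = \sum_p \delta_p$ and the inequality $K_{\widetilde X}^2 = 8\bigl(1-q(\widetilde X)\bigr) - N \le K_X^2 = K_{X_t}^2 \le 10$, where $N$ counts the blow-ups needed to pass from a minimal ruled model to $\widetilde X$, should force the set $\{p : \delta_p > 0\}$ to be a singleton. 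Making the comparison of $K^2$ between the $\mathbb Q$-Gorenstein surface $X$ and its resolution $\widetilde X$ rigorous, controlling the discrepancy contributions at rational versus non-rational singularities, and extracting the intersection-theoretic count against the ruling, are the delicate technical cores of the argument.
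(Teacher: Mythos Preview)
The paper does not supply its own proof of this proposition: it is quoted as Theorem~1 of B\u{a}descu (1986) and serves only as background and motivation for Corollary~\ref{corollary:degeneration}. There is therefore nothing in the present paper to compare your proposal against; any detailed assessment would have to be made against B\u{a}descu's original article.

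For what it is worth, the first half of your outline---constancy of $\chi(\mathcal O)$ in a flat family, specialization of uniruledness forcing $\kappa(\widetilde X)=-\infty$, and the Leray defect identity $\chi(\mathcal O_X)-\chi(\mathcal O_{\widetilde X})=\sum_p\dim(R^1\pi_\ast\mathcal O_{\widetilde X})_p$ yielding $q(\widetilde X)=\sum_p\delta_p$---is standard and correctly handles the dichotomy and case~(a). The substantive content of the result is the \emph{uniqueness} of the non-rational point in case~(b), and there your sketch is not yet an argument: the step $K_X^2=K_{X_t}^2$ presupposes that $K_X$ is $\mathbb Q$-Cartier (equivalently that the family is $\mathbb Q$-Gorenstein), which is not among the hypotheses, and even granting that, the displayed inequality chain bounds $q(\widetilde X)$ rather than the number of points with $\delta_p>0$. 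You would need an additional mechanism---for instance a careful analysis of how the exceptional configurations over distinct non-rational points interact with the Albanese fibration---to conclude singletonness; this is exactly where the work in B\u{a}descu's paper lies.
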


Applying Theorem~\ref{theorem:Main} (or Corollary~\ref{corollary:multiple-cpq}), one may generalize Bădescu's result without the hypothesis on the second Betti number of general fibers $X_t$, but, under some additional mild condition on the central fiber $X_0$.

\begin{corollary}\label{corollary:degeneration}
Let $X$ be a degeneration of a family $\{X_t \mid t \in \Delta-\{0\}\}$ of rational surfaces. Suppose that the degeneration $X$ has at most quotient surface singularities. Then its minimal resolution $\widetilde{X}$ is also a rational surface.
\end{corollary}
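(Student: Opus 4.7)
The plan is to realize the minimal resolution $\widetilde{X}$ as a symplectic rational blow-up of a general smooth fiber $X_t$ along disjoint $C_{p_j, q_j}$'s, and then to apply Corollary~\ref{corollary:multiple-cpq}. Since $X$ is a normal projective surface, $\widetilde{X}$ is smooth projective, hence K\"ahler, and so carries a natural symplectic form; consequently it suffices to show that $\widetilde{X}$ is rational as a symplectic $4$-manifold, because the Enriques--Kodaira classification (together with $q(\widetilde{X}) = h^{0,1}(\widetilde{X}) = 0$, forced by K\"ahler Hodge theory once $b_1 = 0$) then implies that $\widetilde{X}$ is a rational complex surface.

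To locate the required $C_{p_j, q_j}$ configurations, I would invoke the structure theorem of Koll\'ar and Shepherd-Barron on smoothings of quotient surface singularities. For each singular point $x_i$ of $X = X_0$, the local smoothing cut out by the family $\mathcal{X} \to \Delta$ factors through a $P$-resolution $Y_i \to (X, x_i)$: the surface $Y_i$ has only singularities of class $T$ (cyclic quotients of type $\tfrac{1}{p^2}(1, pq-1)$), and the original smoothing lifts to a $\mathbb{Q}$-Gorenstein smoothing of $Y_i$. Under such a $\mathbb{Q}$-Gorenstein smoothing, each class-$T$ singularity of $Y_i$ smooths to its Milnor fiber, namely the rational homology ball $B_{p,q}$. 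The minimal resolution $\widetilde{X} \to X$ factors through $Y := \bigcup_i Y_i$, and the exceptional divisor of $\widetilde{X} \to Y$ is a disjoint union $\bigsqcup_j C_{p_j, q_j}$, one chain per class-$T$ point of $Y$. These chains are complex curves on the K\"ahler surface $\widetilde{X}$, so they are symplectically embedded and intersect positively. Replacing each $C_{p_j, q_j}$ by the corresponding $B_{p_j, q_j}$ along the common lens-space boundary $L(p_j^2, p_j q_j - 1)$ recovers $X_t$ up to diffeomorphism, so $X_t$ is diffeomorphic to the symplectic rational blow-down of $\widetilde{X}$ along $\bigsqcup_j C_{p_j, q_j}$.

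Having set up this picture, Corollary~\ref{corollary:multiple-cpq}, applied to $\widetilde{X}$ together with the disjoint symplectic collection $\{C_{p_j, q_j}\}$, yields that $\widetilde{X}$ is rational as a symplectic $4$-manifold, since its symplectic rational blow-down $X_t$ is a smooth rational surface and hence a rational symplectic $4$-manifold. Combined with the opening paragraph, this forces $\widetilde{X}$ to be a rational complex surface. The main obstacle I anticipate is the middle step: one must invoke the Koll\'ar--Shepherd-Barron structure theorem for smoothings of quotient surface singularities and then keep careful track of the boundary identifications under the $\mathbb{Q}$-Gorenstein smoothing, in order to legitimately identify $X_t$ with the symplectic rational blow-down of $\widetilde{X}$ along the class-$T$ chains; once that topological identification is justified, the rest is a clean application of the main theorem.
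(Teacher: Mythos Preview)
Your strategy matches the paper's: realize a general fiber $X_t$ as a symplectic rational blow-down of $\widetilde{X}$ along disjoint $C_{p_j,q_j}$'s and then invoke Corollary~\ref{corollary:multiple-cpq}. The difference lies in the key input. The paper does not go through Koll\'ar--Shepherd-Barron $P$-resolutions; instead it cites the symplectic filling results of Bhupal--Ozbagci, H.~Choi--J.~Park, and H.~Choi--H.~Park--D.~Shin, which say directly that every minimal symplectic filling (in particular every Milnor fiber) of a quotient surface singularity is obtained from a neighborhood of the exceptional configuration in the \emph{minimal} resolution by a sequence of rational blow-downs. This packages exactly the topological identification you flag as ``the main obstacle'' and sidesteps the issue you would otherwise have to check in your approach, namely that the minimal resolution of the $P$-resolution $Y$ coincides with $\widetilde{X}$ (so that $\widetilde{X}\to Y$ exists with exceptional locus $\bigsqcup_j C_{p_j,q_j}$); this is true for quotient singularities but is an extra step your route needs. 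Conversely, your algebro-geometric route makes the provenance of the $C_{p_j,q_j}$'s more transparent and is essentially what underlies the cited symplectic filling theorems, so the two arguments are close cousins rather than genuinely different proofs.
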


\begin{proof}
Topologically, a general fiber $X_t$ is diffeomorphic to the symplectic 4-manifold constructed from the minimal resolution $\widetilde{X}$ by replacing small neighborhoods of the collection of the exceptional $2$-spheres of each singular points of $X$ with their Milnor fibers corresponding to the given smoothing $f \colon \mathcal{X} \to \Delta$.

Notice that any minimal symplectic fillings of a quotient surface singularity can be obtained from a small neighborhood of the collection of its exceptional 2-spheres by applying a sequence of rational blow-down surgeries (See Bhupal-Ozbagci~\cite{Bhupal-Ozbagci-2016} and H. Choi-J. Park~\cite{Choi-Park-2020}, or refer H. Choi-H. Park-D. Shin~\cite{Choi-Park-Shin-2021}). Since Milnor fibers of a quotient surface singularity is naturally a minimal symplectic filling of the singularity, a general fiber $X_t$ is diffeomorphic to the symplectic 4-manifold obtained from the minimal resolution $\widetilde{X}$ by a sequence of rational blow-down surgeries. Therefore if $X_t$ is rational then so is  $\widetilde{X}$ by Corollary~\ref{corollary:multiple-cpq}.
\end{proof}


\begin{thebibliography}{99}
\bibitem{Badescu-1986} Bădescu, Lucian. \textit{Normal projective degenerations of rational and ruled surfaces}. J. Reine Angew. Math. \textbf{367} (1986), 76--89.

\bibitem{Bhupal-Ozbagci-2016} Bhupal, Mohan; Ozbagci, Burak. \textit{Symplectic fillings of lens spaces as Lefschetz fibrations}. J. Eur. Math. Soc. \textbf{18} (2016), no. 7, 1515--1535.

\bibitem{Choi-Park-Shin-2021} Choi, Hakho; Park, Heesang; Shin, Dongsoo. \textit{Symplectic fillings of quotient surface singularities and minimal model program}. J. Korean Math. Soc. \textbf{58} (2021), no. 2, 419--437.

\bibitem{Choi-Park-2020} Choi, Hakho; Park, Jongil. \textit{A Lefschetz fibration on minimal symplectic fillings of a quotient surface singularity}. Math. Z. \textbf{295} (2020), no. 3--4, 1183--1204.

\bibitem{Fintushel-Stern-1997} Fintushel, Ronald; Stern, Ronald. \textit{Rational blowdowns of smooth 4-manifolds}. J. Differential Geom. \textbf{46}  (1997), no. 2, 181--235.

\bibitem{McDuff-Salamon-1996} McDuff, Dusa; Salamon, Dietmar. \textit{A survey of symplectic 4-manifolds with $b^+=1$}. Turkish J. Math. \textbf{20} (1996), no. 1, 47--60.

\bibitem{Gay-Stipsicz-2009} Gay, David T.; Stipsicz, András I. \textit{Symplectic surgeries and normal surface singularities}. Algebr. Geom. Topol. \textbf{9} (2009), no. 4, 2203--2223.

\bibitem{JPark-1997} Park, Jongil. \textit{Seiberg-Witten invariants of generalised rational blow-downs}. Bull. Austral. Math. Soc. \textbf{56} (1997),  no. 3, 363--384.

\bibitem{Li-2008} Li, Tian-Jun. \textit{The Space of Symplectic Structures on Closed 4-Manifolds}. arXiv:0805.2931.

\bibitem{Park-Stipsicz-2014} Park, Heesang; Stipsicz, András I. \textit{Smoothings of singularities and symplectic surgery}. J. Symplectic Geom. \textbf{12} (2014), no. 3, 585--597.

\bibitem{JPark-1997} Park, Jongil. \textit{Seiberg-Witten invariants of generalised rational blow-downs}. Bull. Austral. Math. Soc. \textbf{56} (1997), no. 3, 363--384.

\bibitem{Symington-2001} Symington, Margaret. \textit{Generalized symplectic rational blowdowns}. Algebr. Geom. Topol. \textbf{1} (2001), 503--518.
\end{thebibliography}
\end{document}